\def\AAA{\mathbb{A}}
\def\MM{\mathcal{M}}
\def\NNN{\mathbb{N}}
\def\QQQ{\mathbb{Q}}
\def\UU{\mathcal{U}}
\def\a{\alpha}
\def\b{\beta}
\def\g{\gamma}
\def\d{\delta}
\def\D{\Delta}
\def\nmdeg{\operatorname{nmdeg}}
\DeclareMathOperator{\tp}{tp}
\DeclareMathOperator{\acl}{acl}
\DeclareMathOperator{\dcl}{dcl}
\def\trdeg{\operatorname{trdeg}}
\def\ord{\operatorname{ord}}
\newtheorem{theorem}{Theorem}[section]
\newtheorem{thm}[theorem]{Theorem}
\newtheorem{lemma}[theorem]{Lemma}
\newtheorem{cor}[theorem]{Corollary}
\newtheorem{prop}[theorem]{Proposition}
\newtheorem{fact}[theorem]{Fact}
\newtheorem{cond}[theorem]{Condition}
\theoremstyle{definition}
\newtheorem{definition}[theorem]{Definition}
\theoremstyle{remark}
\newcommand{\m}{\mathbb }
\def \D {\Delta}
\newcommand{\gen}[1]{\left\langle#1\right\rangle}
\def\Ind{\setbox0=\hbox{$x$}\kern\wd0\hbox to 0pt{\hss$\mid$\hss} \lower.9\ht0\hbox to 0pt{\hss$\smile$\hss}\kern\wd0}
\def\Notind{\setbox0=\hbox{$x$}\kern\wd0\hbox to 0pt{\mathchardef \nn=12854\hss$\nn$\kern1.4\wd0\hss}\hbox to 0pt{\hss$\mid$\hss}\lower.9\ht0 \hbox to 0pt{\hss$\smile$\hss}\kern\wd0}
\numberwithin{equation}{section}
\begin{document}
\subjclass[2010]{11F03, 12H05, 03C60}

\title{Strong minimality of triangle functions}
\author[G. Casale]{Guy Casale}
\author[M. DeVilbiss]{Matthew DeVilbiss}
\author[J. Freitag]{James Freitag}
\author[J. Joel Nagloo]{Joel Nagloo}
\address{Guy Casale, Univ Rennes, CNRS, IRMAR-UMR 6625, F-35000 Rennes, France}
\email{guy.casale@univ-rennes1.fr}
\address{Mathew DeVilbiss, University of Illinois Chicago, Department of Mathematics, Statistics,
and Computer Science, 851 S. Morgan Street, Chicago, IL, USA, 60607-7045.}
\email{mdevil2@uic.edu}
\address{James Freitag, University of Illinois Chicago, Department of Mathematics, Statistics,
and Computer Science, 851 S. Morgan Street, Chicago, IL, USA, 60607-7045.}
\email{jfreitag@uic.edu}
\address{Joel Nagloo, University of Illinois Chicago, Department of Mathematics, Statistics,
and Computer Science, 851 S. Morgan Street, Chicago, IL, USA, 60607-7045.}
\email{jnagloo@uic.edu}
\thanks{G. Casale is partially funded by CAPES-COFECUB project MA 932/19. J. Freitag is partially supported by NSF CAREER award 1945251. J. Nagloo is partially supported by NSF grant DMS-2203508.}

\maketitle
\begin{abstract}
In this manuscript we give a new proof of strong minimality of certain automorphic functions, originally results of Freitag and Scanlon (2017), Casale, Freitag and Nagloo (2020), Bl{\'a}zquez-Sanz, Casale, Freitag and Nagloo (2020). Our proof is shorter and conceptually different than those presently in the literature.
\end{abstract}

\section{Introduction}
In \cite{mahler1969algebraic}, Mahler proved that $j \left( \frac{ \log q}{\pi i } \right)$ satisfies no first or second order differential equation over $\m C (q)$, while conjecturing that any automorphic function of a Fuchsian subgroup of $SL_2 $ with at least $3$ limit points with respect to the action on the upper half plane should also have this property.\footnote{Any of these automorphic functions does satisfy a differential equation of third order over $\m C(q)$.} Mahler established the result under additional restrictions on the Fuchsian group $G \leq SL_2$, and Nishioka \cite{Nishioka} proved Mahler's conjecture, showing: 

\begin{thm} \label{Nishi1}
\cite{Nishioka} Let $G$ be a discontinuous subgroup of $SL_2$ which possesses at least three limit points. Let $u$ be a nonzero complex number. Then any automorphic function of $G$ satisfies no algebraic differential equation of order two over $\m C (z, e^{uz} ).$ 
\end{thm}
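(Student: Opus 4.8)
The plan is to reduce Theorem~\ref{Nishi1} to the strong minimality of a differential-algebraic variety attached to $G$, after which the statement follows from a short orthogonality argument. Work inside a large saturated $\mathcal{U}\models\mathrm{DCF}_0$ with constant field $\mathcal{C}$, and embed into $\mathcal{U}$ a differential field of meromorphic functions of a single variable $z$, with derivation $\d=d/dz$. Viewed in $\mathcal{U}$ this way, an automorphic function $h$ of $G$ is a solution of the third-order Schwarzian-type equation $E_G$ uniformizing $G\backslash\mathbb{H}$; the hypothesis that $G$ has at least three limit points is precisely what forces $E_G$ to be genuinely of order three (if $G$ has fewer limit points its automorphic functions are rational in $z$, resp.\ in $e^{cz}$, and the conclusion is false). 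Set $X_G=\{y:E_G(y)=0\}$, a set defined over $\mathbb{Q}$. The main work of the paper is to prove that $X_G$ is strongly minimal; the same analysis shows $X_G$ is geometrically trivial, hence orthogonal to $\mathcal{C}$. We take these as given.

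The first point is that every solution of $E_G$ realizes the generic type $p$ of $X_G$ over $\mathbb{C}$. Indeed $E_G$ has no constant solution, since the Schwarzian derivative forces $y'\neq 0$, so $h$ is transcendental over $\mathbb{C}$; as $\mathbb{C}$ is algebraically closed, $h\notin\acl(\mathbb{C})$, and strong minimality of $X_G$ (over $\mathbb{Q}\subseteq\mathbb{C}$) then makes $\tp(h/\mathbb{C})$ the unique non-algebraic type of $X_G$ over $\mathbb{C}$, namely $p$. In particular $h,h',h''$ are algebraically independent over $\mathbb{C}$.

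Now realize the coefficient field $K=\mathbb{C}(z,e^{uz})$ inside $\mathcal{U}$ as $\mathbb{C}(t,w)$ with $\d t=1$, $\d w=uw$, $w\neq 0$. Since every solution of $\d y=1$ is $t_0+c$ and every nonzero solution of $\d y=uy$ is $c\,w_0$ for $c\in\mathcal{C}$ and fixed $t_0,w_0$, the tuple $(t,w)$ lies in a $\mathcal{C}$-internal definable set, so $\tp(t,w/\mathbb{C})$ is internal to $\mathcal{C}$. A type orthogonal to $\mathcal{C}$ is orthogonal to every $\mathcal{C}$-internal type, so $p\perp\tp(t,w/\mathbb{C})$, and hence $h\ind_{\mathbb{C}}(t,w)$. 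Therefore $\tp(h/K)$ is the nonforking extension of $p$, i.e.\ the generic type of the strongly minimal set $X_G$ over $K$, so $\trdeg_K K(h,h',h'')=3$. Thus $h,h',h''$ are algebraically independent over $\mathbb{C}(z,e^{uz})$, which is exactly the assertion of Theorem~\ref{Nishi1}.

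The crux --- and the only hard part --- is the input assumed in the first paragraph: that $X_G$ is strongly minimal (and geometrically trivial, hence $\perp\mathcal{C}$); granting it, the deduction above is soft stability theory. A secondary issue is uniformity: Theorem~\ref{Nishi1} is stated for every discontinuous $G$ with at least three limit points, whereas the new input is cleanest for triangle groups, so one must verify that the Schwarzian equations arising in general are covered by (or reduce to) that case and that each corresponding $X_G$ is strongly minimal and geometrically trivial.
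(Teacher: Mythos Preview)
The paper does not contain a proof of Theorem~\ref{Nishi1}: it is quoted from \cite{Nishioka} as historical background and motivation, and plays no logical role in the arguments that follow. So there is nothing to compare your proposal against.

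More importantly, your proposal reverses the paper's logical flow. You assume as ``given'' that $X_G$ is strongly minimal and geometrically trivial, and then deduce Nishioka's theorem by an orthogonality argument. But the paper does exactly the opposite: it \emph{uses} Nishioka's result (specifically Theorem~\ref{nishi2}, non-$2$-reducibility over $\mathbb{C}$) as one of the inputs needed to establish strong minimality. Your approach would thus be presenting a corollary of the paper's main theorem as if it were a proof of one of its cited prerequisites.

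Even granting that inversion, your argument only recovers the triangle-group case of Theorem~\ref{Nishi1}, since the paper's strong minimality result is proved only for Schwarz triangle functions, not for arbitrary discontinuous $G\leq SL_2$ with at least three limit points. You flag this yourself in your last paragraph. The general statement in Theorem~\ref{Nishi1} is genuinely Nishioka's; the paper neither claims nor provides a new proof of it. Your deduction from strong minimality and orthogonality to the constants is correct stability theory, but it is not what the paper is doing, and it does not yield the theorem as stated.
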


Given a hyperbolic triangle in the upper half plane with angles $0$ or $\pi /n $ for $n \in \m N_{>1}$, through successive hyperbolic reflections over the sides of the triangle, one can tile the upper half plane. The group of transformations generated by the reflections is an example of a (hyperbolic) triangle group; such groups are discrete Zariski-dense subgroups of $SL_2$ and their collection of limit points is $\m R \cup \{ \infty \}$. They are thus are a special case of the groups considered in Theorem \ref{Nishi1}. Schwarz triangle functions are automorphic functions for these hyperbolic triangle groups, and Nishioka proved a strengthening of Theorem \ref{Nishi1} for Schwarz triangle functions: 

\begin{thm} \label{nishi2}
Schwarz triangle functions are not $2$-reducible\footnote{For the definition of $n$-reducibility, see Subsection \ref{nofibs}.} over $\m C$.
\end{thm}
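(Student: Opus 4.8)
The plan is to obtain the statement as a consequence of strong minimality of the order‑three differential equation defining the triangle function. Fix the hyperbolic triangle group $\Gamma\leq\SL_2(\m R)$ and let $f=\chi_\Gamma$ be its Schwarz triangle function — the many‑valued inverse of the developing map of $\m H/\Gamma$. Classically $f$ is a local solution of an algebraic differential equation $E_\Gamma$ of order three over $\overline{\QQ}$, a Schwarzian equation $\{y;t\}+R_\Gamma(y)(y')^{2}=0$ with $R_\Gamma$ encoding the three vertex angles; after clearing denominators, $E_\Gamma$ cuts out in a saturated $\mc U\models\mathrm{DCF}_0$ an irreducible differential variety $X=X_\Gamma$ of order three, whose generic point $a$ has $\trdeg(\m C\langle a\rangle/\m C)=3$. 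I first record the (bookkeeping) translation: ``$f$ is not $2$‑reducible over $\m C$'' says exactly that $a$ lies in $\acl(F_n)$ for no finite tower of differential fields $\m C=F_0\subseteq F_1\subseteq\cdots\subseteq F_n$ with $\trdeg(F_{i+1}/F_i)\leq 2$ at every step; equivalently, $\tp(a/\m C)$ is not analysable in types of order at most $2$.

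\textbf{Strong minimality of $X$.} This is the core. Suppose $X$ were not strongly minimal; being $\omega$‑stable and irreducible, it then possesses, over some parameter tuple, a proper infinite differential subvariety. Let $d$ realize its generic type, so $\trdeg(\m C\langle d\rangle/\m C\bar c)\in\{1,2\}$ and $d\notin\acl(\m C\bar c)$ for some finite $\bar c$ — whose length may be bounded using the Freitag--Moosa theorem on the degree of nonminimality. Pass to $e=\Cb(\tp(d/\m C\bar c))$: it lies in $\dcl$ of a Morley sequence $d_{1},\dots,d_{k}$ in $\tp(d/\m C\bar c)$, a tuple of non‑constant solutions of $E_\Gamma$; choosing $d$ independent from $d_{1},\dots,d_{k}$ over $\m C e$, we obtain non‑constant solutions $d,d_{1},\dots,d_{k}$ of $E_\Gamma$ with $\trdeg(\m C\langle d\rangle/\m C\langle d_{1},\dots,d_{k}\rangle)\in\{1,2\}$, hence $\trdeg(\m C\langle d,d_{1},\dots,d_{k}\rangle/\m C)\leq 3k+2<3(k+1)$. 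The Ax--Schanuel theorem for $E_\Gamma$ (Bl{\'a}zquez-Sanz--Casale--Freitag--Nagloo) then forces two of $d,d_{1},\dots,d_{k}$ to be related by a M\"obius transformation in the commensurator of $\Gamma$, hence interalgebraic over $\m C$. If $d$ is interalgebraic with some $d_{i}$, then $d\in\acl(\m C e)$, a contradiction; if two of the $d_{i}$ are interalgebraic, the Morley sequence shortens and the argument iterates to the same contradiction. Therefore $X$ is strongly minimal, i.e. $\RU(a/\m C)=1$.

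\textbf{Conclusion, and the expected obstacle.} Granting $\RU(a/\m C)=1$, suppose $a\in\acl(F_n)$ for a tower as above, with $F_{i+1}=F_i\langle b_i\rangle$. Since $a\notin\acl(\m C)$, pick the least $i$ with $a\nind_{F_{i-1}}b_i$; forking transitivity gives $a\ind_{\m C}F_{i-1}$, so $\trdeg(\m C\langle a\rangle/F_{i-1})=3$, whereas minimality of the Lascar rank forces $a\in\acl(F_i)$ and hence $\trdeg(\m C\langle a\rangle/F_{i-1})\leq\trdeg(F_i/F_{i-1})\leq 2$ — absurd. So $f$ is not $2$‑reducible over $\m C$. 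In this scheme the translation and the final transcendence‑degree count are routine; the real work, and the step I expect to be hardest, is strong minimality — specifically the model‑theoretic packaging of Ax--Schanuel: arranging that a nonminimality witness can be taken among solutions of $E_\Gamma$ (via canonical bases and Morley sequences), dispatching the finitely many ``$\Gamma$‑related'' configurations the theorem leaves open, and checking that $X$ is orthogonal to the constants — which underlies the Ax--Schanuel analysis and follows from Zariski‑density of $\Gamma$ in $\SL_2$ (semisimplicity of the attached monodromy).
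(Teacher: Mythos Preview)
The paper does not prove this theorem at all: it is quoted as Nishioka's result (reference \cite{nishiokaII}) and is used as one of the two \emph{inputs} to the paper's new proof of strong minimality. The paper's logical flow is
\[
\text{Nishioka (Thm~\ref{nishi2})} \;+\; \text{no order-2 subvarieties (Section~\ref{ord2})} \;\Longrightarrow\; \text{strong minimality},
\]
whereas your proposal runs the implication in the opposite direction: you first establish strong minimality and then read off non-$2$-reducibility. The closing paragraph of your argument --- strong minimality of an order-$3$ equation implies that the generic solution is not $2$-reducible --- is correct and essentially trivial; it is the easy direction, and the paper implicitly relies on its converse being the substantive one.

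The genuine issue is how you obtain strong minimality. You invoke the Ax--Schanuel theorem of Bl{\'a}zquez-Sanz--Casale--Freitag--Nagloo, but that is precisely one of the results (alongside \cite{freitag2017strong}, \cite{casale2020ax}) that the present paper announces it is \emph{reproving} by more elementary means. So relative to this paper your argument is circular: you are deducing Theorem~\ref{nishi2} from machinery whose existing proofs already subsume strong minimality (and in some formulations use it). If your intent was to give a self-contained proof of Theorem~\ref{nishi2} independent of Nishioka's original argument, you would need an Ax--Schanuel statement whose proof does not itself pass through strong minimality or through Nishioka-type irreducibility results; you do not address this, and it is exactly the delicate point. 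In short: the final step is fine, but the core step imports the very theorems the paper is written to circumvent.
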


Nishioka's above theorems are special cases of a conjecture of Painlev\'e \cite[pp 444-445 and p 519]{painleve1leccons}
proved in \cite{casale2020ax}, in which the field $\m C (z, e^{uz} )$ can be replaced with an arbitrary differential field extension. This generalization is equivalent to the \emph{strong minimality} of the equation, a notion having its origins in model theory. In this short article we give a new proof of the strong minimality of Schwarz triangle functions (originally results of \cite{freitag2017strong, blazquez2020some}, later \cite{aslanyan2021ax}) using the recent work of \cite{freitagmoosa}. 


Our proof utilizes the connection between solutions of a Schwarzian equation and a certain associated linear differential equation, via Puiseux series. 
From there, we use Nishioka's theorem \ref{nishi2} with a model theoretic study of fibrations inspired by the work of \cite{moosa2014model} to prove the strong minimality of Schwarz triangle functions. The motivation for establishing the strong minimality of certain differential equations has been discussed many places and ranges from functional transcendence theorems \cite{casale2020ax} to diophantine results \cite{freitag2017strong, HrML, HPnfcp} to understanding solutions of equations from special classes of functions \cite{freitag2021not, freitag2022equations}.

\subsection{Organization of this paper} 
In Section \ref{thenonminsection}, we give a brief explanation of the work of Freitag and Moosa \cite{freitagmoosa} and its specific consequences in our context. In Section \ref{ord2}, we develop a connection between order two subvarieties of our nonlinear equations and algebraic solutions of certain associated Riccati equations. 
In section \ref{nishgen} we finish the proof of the strong minimality of Schwarz triangle functions using Section \ref{ord2}.

\section{Nonminimality} \label{thenonminsection}
\subsection{The degree of nonminimality} 
It is a well-known consequence of stable embeddedness that the canonical base of a forking extension of a stable type can be found in the algebraic closure of an initial segment of a Morley sequence in the type. In very recent work, Freitag and Moosa \cite{freitagmoosa} introduced the following definition which strengthens the bound on the length of a sequence one needs to consider. 

\begin{definition}
Suppose $p\in S(A)$ is a stationary type of $U$-rank larger than one. 
By the {\em degree of nonminimality} of $p$, $\nmdeg(p)$, we mean the least positive integer $k$ such that there is a sequence of realisations of $p$ of length $k$, say $(a_1,\dots,a_k)$, and $p$ has a nonalgebraic forking extension over $a_1,\dots,a_k$.
\end{definition}

In the theory of differentially closed fields of characteristic zero, Freitag and Moosa \cite{freitagmoosa} give an upper bound for the degree of nonminimality in terms of the U-rank of the type. 

\begin{thm} \label{boundMorley}
Let $p \in S(k)$ have finite rank. Then $\nmdeg(p) \leq RU(p)+1.$
\end{thm}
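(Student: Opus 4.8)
The plan is to show directly that for a stationary $p\in S(k)$ with $n:=\RU(p)\geq 2$ (the relevant case, since $\nmdeg(p)$ is only defined when the rank exceeds one), $p$ has a nonalgebraic forking extension over some $m\leq n+1$ realizations of itself. First I would note that $\nmdeg(p)$ is exactly the least $m$ for which there exist realizations $a_1,\dots,a_m\models p$ and some $b\models p$ with $1\leq \RU(b/k,a_1,\dots,a_m)\leq n-1$, and that such configurations exist at all: since $\RU(p)=n$, $p$ admits a forking extension of $U$-rank $n-1$, and transporting its canonical base into a Morley sequence of that extension — whose entries themselves realize $p$ — produces a tuple $\bar a$ of realizations of $p$ with $\RU(b/k\bar a)=n-1$ for any $b\models p$. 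So the whole problem becomes a length bound on such $\bar a$.

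Next I would fix $\bar a=(a_1,\dots,a_m)$ of minimal length $m$, so that $\nmdeg(p)=m$. Minimality forces $b$ to be independent over $k$ from every proper subtuple of $\bar a$ while $b\nind_k\bar a$: over a proper subtuple the rank of $b$ over $k$ cannot lie in $[1,n-1]$ (else that shorter subtuple would already witness a nonalgebraic forking extension), and it cannot be $0$ since it does not drop below $\RU(b/k\bar a)\geq 1$, so it must equal $n$. This is a weight-type configuration, and since $\RU(b/k)=n$ is finite the weight of $\tp(b/k)$ is at most $n$; feeding this into the configuration bounds $m$, the finite-rank additivity of $U$-rank (the equality $\RU(XY/k)=\RU(X/kY)+\RU(Y/k)$, the finite case of Lascar's inequalities) handling the bookkeeping, with a possible $+1$ as an edge effect when the entries of $\bar a$ fail to be mutually independent over $k$.

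The crux — and where the finite-rank hypothesis is indispensable — is this last step: extracting the numerical bound $m\leq n+1$ from the weight-type configuration, in particular dealing with the case where the $a_i$ are interdependent over $k$, so that one cannot simply read off a Morley sequence of length $m$ witnessing weight $m$. A secondary point requiring care is the reduction: passing cleanly between the imaginary canonical base and honest realizations of $p$, and choosing the Morley sequence independent from $b$ over the canonical base so that $\tp(b/k\bar a)$ genuinely retains $U$-rank $n-1$ rather than collapsing to an algebraic type.
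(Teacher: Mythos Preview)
This theorem is not proved in the paper: it is stated as a result of \cite{freitagmoosa} and quoted without argument, so there is no proof here to compare your proposal against.

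On the substance of your sketch: the opening move --- locating the canonical base of a nonalgebraic forking extension inside a Morley sequence of realizations of $p$ --- is sound and is indeed how one gets started. But the step you label ``the crux'' is not handled by the weight argument you describe. The minimal-length configuration you set up has $b$ forking with $\bar a=(a_1,\dots,a_m)$ while $b$ is \emph{independent} from every proper subtuple of $\bar a$; in particular $b\ind_k a_i$ for each $i$. That is the opposite of a weight-witnessing family, where one needs a $k$-independent set each of whose members \emph{individually} forks with $b$. So the bound $\mathrm{wt}(b/k)\leq \RU(b/k)=n$ does not by itself constrain $m$, and the Lascar-inequality bookkeeping you allude to does not obviously repair this. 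You correctly flag this as the point requiring work, but the proposal as written does not indicate how to close it; the argument in \cite{freitagmoosa} proceeds differently, bounding directly the length of Morley sequence needed to capture the canonical base rather than running a weight computation on $b$ against a minimal dependent tuple.
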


In \cite{devilbiss2021generic}, DeVilbiss and Freitag use the bounds of \cite{freitagmoosa} along with computations involving the Lascar rank of underdetermined systems of differential equations to give a proof of the strong minimality of generic differential equations of sufficiently high degree. One of the challenging aspects of generalizing the proofs of \cite{devilbiss2021generic} to various other classes of equations is the complexity of the series of algebraic reductions used to calculate the rank of a certain associated linear system. 

In this manuscript, we will use the following result of \cite[a slight restatement of Proposition 3.3]{freitagmoosa} which gives a stronger bound than Theorem \ref{boundMorley} in general for types in totally transcendental theories, but only with additional hypotheses on the type:

\begin{prop} \label{nmbound} 
Suppose that $p \in S(A)$ is of finite $U$-rank. If $p$ is \emph{not} almost internal to a non locally modular minimal type, then $\nmdeg{p} \leq 1$. 
\end{prop}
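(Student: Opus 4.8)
The plan is to reduce the statement to the following \emph{quotient principle}: if $a\models p$ and there is $e\in\acl(Aa)\setminus\acl(A)$ with $\RU(e/A)<\RU(p)$, then $\nmdeg(p)\le 1$. Granting this, note that $\RU(p)>1$ (otherwise $\nmdeg(p)$ is not defined), and it then suffices to produce such an $e$, i.e.\ to show that under the hypothesis $p$ is not \emph{primitive}.

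To prove the quotient principle, let $c$ realise the nonforking extension of $\tp(a/Ae)$ over $\acl(Aa)$, so $c\models\tp(a/Ae)$ and $c\ind_{Ae}a$. Then $\tp(c/A)=\tp(a/A)=p$. Since $e\in\acl(Aa)$ we have $\acl(Aae)=\acl(Aa)$, so $\RU(c/Aa)=\RU(c/Aae)=\RU(c/Ae)=\RU(a/Ae)$, the middle equality using $c\ind_{Ae}a$ and the last using $\tp(c/Ae)=\tp(a/Ae)$. Because $\RU(e/Aa)=0<\RU(e/A)$ we have $a\nind_A e$, hence $\RU(a/Ae)<\RU(p)$, so $\tp(c/Aa)$ forks over $A$; and $\RU(a/Ae)\ge 1$, since $\RU(a/Ae)=0$ would force $\RU(p)=\RU(a/A)=\RU(ae/A)\le\RU(e/A)<\RU(p)$. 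Thus $\tp(c/Aa)$ is a nonalgebraic forking extension of $p$ over $Aa$, witnessing $\nmdeg(p)\le 1$.

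It remains to show $p$ is not primitive. Since $p$ has finite $\RU$-rank it is analysable in minimal types; fix a minimal type $q$ at the first level of a semiminimal analysis, so there is $e_0\in\acl(Aa)\setminus\acl(A)$ with $\tp(e_0/A)$ almost $q$-internal. If $\RU(e_0/A)<\RU(p)$ we are done with $e=e_0$. Otherwise $p$ is itself almost $q$-internal; then $q$ must be locally modular, for if $q$ were non-locally modular then $p$ would be almost internal to a non-locally modular minimal type, against the hypothesis. So we may assume $p$ is almost internal to a locally modular minimal type $q$ with $\RU(p)\ge 2$. If $q$ is trivial, then up to interalgebraicity over a set independent from $a$ over $A$ the tuple $a$ is a collection of mutually independent conjugates of realisations of $q$, and projecting to a proper nonempty subtuple exhibits a proper quotient. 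If $q$ is nontrivial, then $q$ is nonorthogonal to the generic type of a locally modular (hence finite-by-abelian, $1$-based) minimal group $G_0$, and the analysis presents $a$, again up to interalgebraicity over parameters independent from $a$ over $A$, as a generic element of a connected definable subquotient $H\le G_0^{\,m}$ with $\RU(H)=\RU(p)\ge 2$; projecting $H$ onto a single coordinate $G_0$, the image is $0$ or all of $G_0$, and iterating one extracts a proper infinite connected definable subgroup $K\le H$, so that $H/K$ is a proper quotient. Pulling this quotient back through the interalgebraicities produces the required $e$, and the quotient principle finishes the proof.

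The main obstacle is the last paragraph: one must verify that almost internality to a \emph{minimal} locally modular type really does realise $p$ inside a subquotient of a power of a minimal group, so that the coordinate-projection argument applies, as opposed to merely making the relevant socle group $1$-based, where a priori one might fear a connected group of $\RU$-rank $\ge 2$ with no proper infinite definable subgroup. The rest is bookkeeping: descending the chosen quotient through the successive almost-internal steps of the analysis while preserving the $\RU$-rank inequalities needed in the quotient principle, and arranging that the auxiliary parameters appearing in the interalgebraicities are independent from $a$ over $A$, so that $\tp(e/A)$ remains stationary and the rank computations are unaffected.
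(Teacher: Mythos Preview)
The paper does not prove this proposition; it is quoted from \cite{freitagmoosa} as a restatement of their Proposition~3.3 and used as a black box, so there is no in-paper argument to compare against.

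Your quotient principle is correct and is exactly the engine of the argument, and the reduction to non-primitivity via the first step $e_0$ of a semiminimal analysis is also sound (if $\RU(e_0/A)=\RU(p)$ then $a\in\acl(Ae_0)$, so $p$ itself is almost internal to some minimal $q$, necessarily locally modular by hypothesis). The genuine gap, which you yourself flag, is the final step: producing a proper quotient \emph{over $A$} once $p$ is almost internal to a locally modular minimal $q$ with $\RU(p)\ge 2$. Neither branch of your case split closes this. In the trivial case, triviality of $q$ constrains $\acl$ only among realisations of $q$, not on arbitrary elements algebraic over them, so the asserted interalgebraicity of $a$ with a tuple of independent $q$-realisations is not justified. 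In the group case, the quotient you construct lives over the auxiliary parameters $B$ witnessing internality, and an element of $\acl(Ba)$ need not lie in $\acl(Aa)$; descending it to $A$ is exactly where one-basedness is doing real work, not ``bookkeeping''. The clean fix bypasses the case split altogether: almost internality to a locally modular minimal type makes $p$ one-based, and for a one-based $p$ of rank $\ge 2$ one chooses any $C\supseteq A$ with $\RU(a/C)=1$ and sets $e=\Cb(a/C)$; then $e\in\acl(Aa)$ by one-basedness, $e\notin\acl(A)$ since $\tp(a/Ae)$ forks over $A$, and $\RU(e/A)<\RU(p)$ since otherwise $a\in\acl(Ae)$ would force $\RU(a/Ae)=0\neq 1$. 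This $e$ feeds directly into your quotient principle.
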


In the theory of differentially closed fields, a strong form of the Zilber trichotomy holds - the non locally modular minimal type can be without loss of generality assumed to be the type of a generic constant. By results of \cite{freitag2017finiteness}, a differential variety has infinitely many co-order one subvarieties over some finitely generated differential field $k$ \emph{if and only if} $X$ is nonorthogonal to the constants. So, it follows that:

\begin{prop} \label{nmbound1}
Suppose that $p \in S(A)$ is the generic type of a finite rank affine differential variety $X$. If $X$ has no co-order one differential subvarieties, then $\nmdeg{p} \leq 1$. 
\end{prop}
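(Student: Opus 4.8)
The plan is to obtain Proposition \ref{nmbound1} directly from Proposition \ref{nmbound} together with the structure theory of finite rank types in $\mathrm{DCF}_0$; no genuinely new model theory should be required. By Proposition \ref{nmbound} it suffices to check that the generic type $p$ of $X$ is \emph{not} almost internal to a non locally modular minimal type, granting that $X$ has no co-order one differential subvariety. So the whole argument comes down to a single implication: if $p$ is almost internal to a non locally modular minimal type, then $X$ has a co-order one differential subvariety.

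I would prove this implication by unwinding the two standard facts the text has already recalled. Suppose $p$ is almost internal to a non locally modular minimal type $q$. By the Zilber dichotomy for differentially closed fields of characteristic zero, $q$ is almost internal to the generic type of the field of constants $\CC$. Since almost internality is transitive, $p$ is almost internal to $\CC$; and since $p$ is nonalgebraic (indeed $\RU(p) > 1$, as otherwise $\nmdeg(p)$ would not even be defined), it follows that $p$, equivalently $X$, is nonorthogonal to the constants. I then invoke the theorem of \cite{freitag2017finiteness} quoted above: a finite rank differential variety is nonorthogonal to the constants if and only if it has infinitely many co-order one differential subvarieties over some finitely generated differential field. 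In particular $X$ has at least one co-order one differential subvariety, contradicting the hypothesis. This proves the implication, and hence, by Proposition \ref{nmbound}, that $\nmdeg(p) \leq 1$.

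There is no serious obstacle here; what needs care is bookkeeping. The one point to state precisely is that ``almost internal to some non locally modular minimal type'' is equivalent to ``almost internal to $\CC$'' --- the strong form of the Zilber trichotomy special to $\mathrm{DCF}_0$ together with transitivity of almost internality, exactly the remark made in the text preceding the statement, so it can be cited rather than reproved. A secondary point is to make sure the notion of ``co-order one differential subvariety'' in the hypothesis (a differential subvariety of $X$ whose Lascar rank is one less than that of $X$) matches the one in \cite{freitag2017finiteness}, and to observe that the latter produces such subvarieties possibly only after passing to a larger finitely generated base field; since the hypothesis forbids co-order one subvarieties over every field, this causes no difficulty. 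Everything else is immediate.
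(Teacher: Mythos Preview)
Your proposal is correct and is exactly the argument the paper sketches in the paragraph immediately preceding the proposition: reduce to Proposition~\ref{nmbound} via the strong Zilber trichotomy in $\mathrm{DCF}_0$ (so almost internality to a non locally modular minimal type amounts to almost internality, hence nonorthogonality, to the constants), and then invoke \cite{freitag2017finiteness} to obtain a co-order one subvariety. Your bookkeeping remarks about transitivity of almost internality and the base field over which the subvarieties appear are apt and fill in precisely the details the paper leaves implicit.
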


\subsection{No proper fibrations} \label{nofibs}
The following definition comes from \cite{moosa2014some} and applies generally in the case that $T$ is a complete theory admitting elimination of imaginaries and $p$ is a type of finite $U$-rank. 
\begin{definition}
A stationary type $p = \tp (a/A)$ \emph{admits no proper fibrations} if whenever $c \in \dcl (Aa) \setminus \acl (A)$, we must have $a \in \acl (Ac)$. 
\end{definition}

The following result \cite[Proposition 2.3]{moosa2014some} gives a useful restriction on those types with no proper fibrations. 
\begin{prop} \label{nofib}
Suppose that $p = \tp (a/A )$ is stationary and admits no proper fibrations. Then either
\begin{enumerate}
    \item $p$ is almost internal to a non locally modular minimal type, or 
    \item $a$ is interalgebraic over $A$ with a finite tuple of independent realizations of a locally modular minimal type over $A$. 
\end{enumerate}
\end{prop}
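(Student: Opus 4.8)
The plan is to combine the semi-minimal analysis of finite $U$-rank types with the no-proper-fibrations hypothesis in order to reduce to the case where $p$ is almost internal to a \emph{single} minimal type, and then to split on whether that minimal type is locally modular.

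First I would reduce to almost-internality to a minimal type. We may assume $U(p)\geq 1$, the case $U(p)=0$ being trivial. Since $p$ has finite $U$-rank, $a$ is analyzable in the family of minimal types; taking the first nontrivial level of a semi-minimal analysis of $\tp(a/A)$ we obtain $b\in\dcl^{\mathrm{eq}}(Aa)\setminus\acl^{\mathrm{eq}}(A)$ with $\tp(b/A)$ semi-minimal, i.e. almost internal to some minimal type $r$. Applying the no-proper-fibrations hypothesis to this $b$ gives $a\in\acl(Ab)$, so $a$ and $b$ are interalgebraic over $A$; since almost-internality to $r$ is preserved under interalgebraicity, $p$ itself is almost internal to $r$. (In fact the same argument shows that \emph{every} element of $\dcl(Aa)\setminus\acl(A)$ is interalgebraic over $A$ with $a$, a rigidity used again below.) If $r$ is not locally modular, then $p$ is almost internal to a non locally modular minimal type and conclusion (1) holds, so from now on assume $r$ is locally modular.

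It remains to establish conclusion (2) when $r$ is locally modular. Being almost internal to a locally modular minimal type, $p$ is one-based. I would then invoke the binding group $G$ of the almost-$r$-internal type $p$: $G$ is type-definable, acts definably on the set of realizations of $p$, and acts with finitely many orbits under its connected component $G^{0}$. Local modularity of $r$ makes the $r$-internal structure one-based, so $G$ is a one-based group, hence abelian-by-finite; replacing $G$ by $G^{0}$ we may take $G$ abelian, and an abelian group internal to a locally modular minimal type is interalgebraic over its parameters with a finite tuple of independent realizations of $r$. Finally one transports this coordinatization from $G$ to $p$: the realizations of $p$ form a homogeneous space under $G$, and any nontrivial definable quotient of this homogeneous space would supply an element of $\dcl(Aa)\setminus\acl(A)$ over which $a$ is not algebraic — a proper fibration, which is excluded. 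Hence $a$ is itself interalgebraic over $A$ with a finite tuple of independent realizations of the (single) locally modular minimal type $r$, giving conclusion (2).

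The main obstacle is this last transport step: making precise and proving that ``no proper fibrations'' forces the $G$-homogeneous space of realizations of $p$ to untwist into an honest independent tuple of realizations of the locally modular minimal type $r$ — over $A$, with no auxiliary parameters, and with the underlying minimal type descending to $A$. This rests on the structure theory of one-based groups (abelian-by-finite, with a well-behaved lattice of almost-definable subgroups and cosets) together with careful bookkeeping to verify that every would-be proper subquotient of $G$ or twist of the homogeneous space genuinely yields an element of $\dcl(Aa)$ witnessing a proper fibration. An alternative route that avoids binding groups is a direct induction on $U(p)$, peeling one minimal ``coordinate'' at a time out of $\acl(Aa)$ and using the hypothesis to control the fibration it induces; the same difficulties reappear there, essentially because one must simultaneously track independence and non-orthogonality of the coordinates produced.
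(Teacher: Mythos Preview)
The paper does not give a proof of this proposition at all; it is quoted as \cite[Proposition~2.3]{moosa2014some} and used as a black box. So there is no argument in the present paper to compare your attempt against.

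That said, your sketch follows the natural line and is close to what Moosa actually does. The reduction step---use the first level of a semi-minimal analysis to produce $b\in\dcl^{\mathrm{eq}}(Aa)\setminus\acl^{\mathrm{eq}}(A)$ almost internal to some minimal $r$, then invoke no-proper-fibrations to get $a\in\acl(Ab)$, hence $p$ almost $r$-internal---is exactly right, and the dichotomy on whether $r$ is locally modular is the correct split. In the locally modular case, Moosa's route is a bit lighter than the full binding-group analysis you outline: once $p$ is one-based one can argue more directly that $a$ is interalgebraic over $A$ with an independent tuple of realizations of $r$, essentially because in a one-based context every nonalgebraic element of $\acl(Aa)$ already realizes (up to interalgebraicity) a minimal type non-orthogonal to $r$, and the no-fibrations hypothesis prevents any residual twist. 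Your flagged ``transport'' difficulty---ensuring the coordinatization lives over $A$ with no auxiliary parameters---is genuine, and in the source it is handled via stationarity and working in $T^{\mathrm{eq}}$. If you want a fully self-contained proof, the induction on $U(p)$ you mention at the end is probably the cleanest way to organize it.
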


The following condition was developed in a series of papers of Nishioka \cite{nishioka1990painleve, nishiokaII}:

\begin{definition}
Let $y$ be differentially algebraic over a differential field $k$. We say $a$ is \emph{$r$-reducible over $k$} if there exists a finite chain of differential field extensions, $$k=R_0 \subset R_1 \subset \ldots R_m$$ such that $a \in R_m$ and $\trdeg{R_i/R_{i-1}} \leq r.$ 
\end{definition}

\begin{lemma} \label{nofib1}
Suppose $p=\tp (a/k ) $ is the generic type of an order $3$ differential equation over $k$. If $a$ is not $2$-reducible over $k$, then $p=\tp (a/k ) $ has no proper fibrations. 
\end{lemma}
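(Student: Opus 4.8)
The plan is to prove the contrapositive: assuming $a$ is \emph{not} $2$-reducible over $k$, I would verify directly that $p=\tp(a/k)$ admits no proper fibrations. So let $c\in\dcl(ka)\setminus\acl(k)$ be arbitrary. Using elimination of imaginaries in $\mathrm{DCF}_0$, I would first replace $c$ by an interdefinable tuple from the field sort, so that $F:=k\gen{c}$ is an actual differential subfield of $k\gen{a}$ and $\acl(F)=\acl(kc)$; the goal then becomes to show that $a$ is algebraic over $F$, i.e.\ $a\in\acl(kc)$.

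The heart of the matter is a transcendence-degree count along the tower $k\subseteq F\subseteq k\gen{a}$. Since $p$ is the generic type of an order $3$ equation, $\trdeg(k\gen{a}/k)\le 3$, so additivity of transcendence degree gives
\[ \trdeg(F/k)+\trdeg(k\gen{a}/F)=\trdeg(k\gen{a}/k)\le 3. \]
As $c\notin\acl(k)$, the extension $F/k$ is not algebraic, so $\trdeg(F/k)\ge 1$; and if $a\notin\acl(F)$, then likewise $\trdeg(k\gen{a}/F)\ge 1$. But then the displayed identity forces $\trdeg(F/k)\le 2$ and $\trdeg(k\gen{a}/F)\le 2$, so the chain $k\subset F\subset k\gen{a}$ (whose top field contains $a$) witnesses that $a$ is $2$-reducible over $k$, contradicting our assumption. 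Hence $a\in\acl(F)=\acl(kc)$, which is exactly what the definition of ``no proper fibrations'' requires.

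I do not expect a real obstacle here: the argument is essentially bookkeeping with transcendence degrees. The only points needing (routine) care are the reduction of $c$ to a field tuple via elimination of imaginaries, the standard identity $\acl(k\gen{c})=\acl(kc)$, and the elementary fact that the generic type of an order $3$ differential equation has transcendence degree at most $3$ over $k$ --- this last being what makes each of the two jumps $\trdeg(F/k)$ and $\trdeg(k\gen{a}/F)$ automatically at most $2$ once the fibration is proper (so $\trdeg(F/k)\ge 1$) and nonalgebraic (so $\trdeg(k\gen{a}/F)\ge 1$). In other words, a proper fibration of $p$ yields an intermediate differential field through which $a$ is obtained by two transcendence-degree-$\le 2$ steps, which is precisely a $2$-reduction.
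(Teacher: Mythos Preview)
Your proposal is correct and follows essentially the same route as the paper: both argue the contrapositive by taking a witness $c\in\dcl(ka)\setminus\acl(k)$ to a proper fibration, forming the intermediate differential field $k\gen{c}$, and observing that the tower $k\subset k\gen{c}\subset k\gen{a}$ has both steps of transcendence degree between $1$ and $2$, hence exhibits a $2$-reduction of $a$. Your explicit invocation of elimination of imaginaries to ensure $c$ may be taken in the field sort is a detail the paper leaves implicit, but otherwise the arguments coincide.
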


\begin{proof}
Suppose that $\tp (a/k )$ has a proper fibration. Let $b \in \dcl (a/k)$ be such that $a \notin \acl (b/k).$ Then note that $3= \trdeg{k \langle a \rangle /k } =\trdeg{k \langle a, b \rangle /k  },$ but since $a \notin \acl (b/k)$ and $b \notin \acl (k)$, we must have $1 \leq \trdeg{k  \langle b \rangle /k } \leq 2$. It follows that $k \subset k \langle b \rangle \subset k \langle a \rangle$ implies that $a$ is $2$-reducible a contradiction.  
\end{proof}

As mentioned in the introduction, in \cite{nishiokaII}, Nishioka proved that Schwarz triangle functions are not $2$-reducible over $\m C$. Though most of the above conditions deal only with the generic solutions of the differential equations we consider, a beautiful elementary argument of Nishioka \cite{Nishioka} shows that the differential equations satisfied by Fuchsian automorphic functions have no proper infinite differential subvarieties over $\m C$, so any nonalgebraic solution is generic over $\m C$. Thus, in order to establish that these equations are strongly minimal, one needs only to verify that their generic types have no nonalgebraic forking extensions.

\section{No order two subvarieties} \label{ord2}

We now aim to show that modulo a certain condition, the Schwarzian equation $(\star)$ has no order two subvarieties. Consider the equation:
\begin{equation}\tag{$\star'$}
S_{\frac{d}{dt}}(y) +(y')^2\cdot R(y) =0\label{(*)},
\end{equation}
where $\frac{dy}{dt} = y'$, $S_{\frac{d}{dt}}(y) = \left( \frac{y''}{y'}\right)' -\frac{1}{2} \left( \frac{y''}{y'}\right)^2$ and $R$ is rational over $\mathbb{C}$. To Equation (\ref{(*)}), we have an associated Riccati equation:
\begin{equation}\tag{$\star\star$}
\frac{du}{dy}+u^2+\frac{1}{2}R(y) =0.
\end{equation}

\begin{cond}\label{Ric}
The Riccati equation ($\star\star$) has no solution in $\mathbb{C}(y)^{alg}$.
\end{cond}

\begin{lemma} \label{juststrmin}
Let $(k,\partial)$ be any differential field extension of $\mathbb{C}$ and let us assume that Condition \ref{Ric} holds. If $y$ is a solution of the Schwarzian equation $(\star')$, then
\[\trdeg k\gen{y}/k\neq 2.\] In other words, Equation $(\star')$ has no order two subvarieties.
\end{lemma}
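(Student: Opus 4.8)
The plan is to take $y$ itself as the independent variable, which turns $(\star')$ into the linear equation $\frac{d^2 w}{dy^2}+\frac12 R(y)w=0$ underlying the Riccati equation $(\star\star)$, and then to show that an order two subvariety would force $(\star\star)$ to have a solution algebraic over $\mathbb{C}(y)$, contradicting Condition \ref{Ric}. Concretely, set $u:=\frac{y''}{2(y')^2}\in k\gen{y}$, where $y'=\partial y$ and $y''=\partial^2 y$. Differentiating $u$, substituting $y'''=\frac32\frac{(y'')^2}{y'}-(y')^3 R(y)$ — which is $(\star')$ solved for $y'''$ — and rewriting $(y'')^2/(y')^3$ in terms of $u$, one gets
\[
\partial u=-\,y'\Bigl(u^2+\tfrac12 R(y)\Bigr).
\]
Equivalently, with the derivation $D:=\frac1{y'}\,\partial$ on $k\gen{y}$ — which satisfies $Dy=1$ and restricts to $\frac{d}{dy}$ on $\mathbb{C}(y)$ — the element $u$ solves $(\star\star)$: $Du+u^2+\tfrac12 R(y)=0$. (In analytic terms, $(y')^{1/2}$, viewed via $D$ as a function of $y$, solves $\frac{d^2 w}{dy^2}+\frac12 R(y)w=0$ and $u=Dw/w$; this is why $(\star\star)$ is the equation ``associated'' to $(\star')$.) Consequently it suffices to produce \emph{any} solution of $(\star\star)$ in $\mathbb{C}(y)^{alg}$.

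Assume toward a contradiction that $\trdeg k\gen{y}/k=2$. Then $y$ and $y'$ are algebraically independent over $k$ (so $y'\neq 0$), $y''$ — hence $u$ — is algebraic over $k(y,y')$, and $k\gen{y}=k(y,y',u)$, since $\partial y'=2u(y')^2$ together with the formula for $\partial u$ express all higher derivatives of $y$ rationally in $y,y',u$. (A preliminary reduction: $\trdeg_{\mathbb{C}}\mathbb{C}\gen{y}\ge \trdeg_k k\gen{y}=2$, so either $\trdeg_{\mathbb{C}}\mathbb{C}\gen{y}=2$, and we may replace $k$ by $\mathbb{C}$, or $y$ is generic over $\mathbb{C}$.) Writing $\eta:=y'$, the point $(y,\eta,u)$ satisfies $\partial y=\eta$, $\partial \eta=2u\eta^2$, $\partial u=-\eta(u^2+\tfrac12 R(y))$, so its Zariski closure $\Sigma\subseteq \mathbb{A}^3$ over $k$ is a surface invariant under the vector field $W=\partial_y+2u\eta\,\partial_\eta-(u^2+\tfrac12 R(y))\partial_u$. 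The decisive structural point is that $W$ is a linear skew product over the Riccati foliation $\overline W:=\partial_y-(u^2+\tfrac12 R(y))\partial_u$ on the $(y,u)$-plane: the $\partial_y$- and $\partial_u$-components of $W$ do not involve $\eta$, and the $\partial_\eta$-component is linear in $\eta$.

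Now project $\Sigma$ to the $(y,u)$-plane. If the projection is not dominant, then $\Sigma=\{P(y,u)=0\}$ for an irreducible $P\in k[y,u]$ and this curve is $\overline W$-invariant. If the projection is dominant, then $u$ is transcendental over $k(y)$ while $\eta$ is algebraic over $k(y,u)$ and satisfies $D\eta=2u\eta$; passing to norms and decomposing over the field of constants $C:=\Const(k)$ into factors linear in $u$, the identity $D\eta/\eta=2u$ forces $\overline W$ to have an invariant curve $\{u-r(y)=0\}$ with $r$ algebraic over $C(y)$, i.e. $r'+r^2+\tfrac12 R(y)=0$. In either case we have an invariant algebraic curve of $\overline W$ defined over a finitely generated subfield of $k$, hence a solution of $(\star\star)$ algebraic over that subfield; since $\overline W$ is defined over $\mathbb{C}$, specialising a transcendence basis of that subfield over $\mathbb{C}$ to a generic tuple of complex numbers produces an invariant curve of $\overline W$ over $\mathbb{C}$, i.e. a solution of $(\star\star)$ in $\mathbb{C}(y)^{alg}$ — contradicting Condition \ref{Ric}. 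This proves the lemma.

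The real work lies in the last step: the algebraic relation witnessing $\trdeg k\gen{y}/k=2$ lives a priori over the (possibly transcendental) field $k$, and the argument must extract from it an invariant curve of the Riccati foliation $\overline W$ and then descend that curve to $\mathbb{C}$. The dominant case — where $u$ itself need not be algebraic over $k(y)$ — is the more delicate one, and it is precisely there that the skew product structure of $W$ over $\overline W$ makes the descent possible.
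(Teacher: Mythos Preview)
Your reduction to the Riccati equation via $u=\tfrac{y''}{2(y')^2}$ and $D=\tfrac{1}{y'}\partial$ is correct and is essentially the same starting point as the paper's. After that, however, the two arguments diverge: the paper expands $u$ as a Puiseux series in $1/y'$ over $L=k(y)^{alg}$ and reads off the $(y')^0$–coefficient $a_0$, which is forced to satisfy $\dfrac{\partial a_0}{\partial y}+\tfrac12 a_0^2+R(y)=0$; you instead look at the Zariski closure $\Sigma$ of $(y,\eta,u)$ and project to the $(y,u)$-plane.

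The gap in your argument is the claim that $\Sigma$ is $W$-invariant. For a general differential field $(k,\partial)$ the ideal $I(\Sigma)\subset k[Y,H,U]$ is stable under the operator $f\mapsto f^{\partial_k}+\eta\,W(f)$, not under $W$ alone; the two coincide only when $\partial|_k=0$. Your parenthetical ``preliminary reduction'' flags the dichotomy $\trdeg_{\mathbb C}\mathbb C\langle y\rangle\in\{2,3\}$ but does not dispose of the second case, and the subsequent argument silently assumes $W$-invariance. In the non-dominant subcase one can in fact recover $\overline W$-invariance of the image curve by separating $\eta$-degrees (if $P\in k[y,u]$ generates $I(\Sigma)$ then $P^{\partial_k}+\eta\,\overline W(P)\in(P)$ forces $\overline W(P)\in(P)$), but you do not carry this out. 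The genuine problem is the dominant subcase: there the norm computation gives, for the minimal polynomial $P(T)=T^n+\cdots+c_0$ of $\eta$ over $k(y,u)$,
\[
P^{\partial_k}(T)+T\bigl(P^{\overline W}(T)+2uTP'(T)\bigr)=(2unT+b)P(T),\qquad b=\frac{\partial_k c_0}{c_0},
\]
so $\overline W(c_0)/c_0$ is $2nu$ only up to a $\partial_k$-correction, and no $\overline W$-invariant curve drops out. Your specialization-to-$\mathbb C$ step does not repair this, since what you would be specializing is not an $\overline W$-invariant object. This is exactly the difficulty the paper's Puiseux argument sidesteps: in the expansion of $u'/y'$, the contribution of $\partial_k$ enters with $y'$-exponent shifted down by one, so it does not interfere with the $(y')^0$-coefficient, and one lands directly on a solution of $(\star\star)$ in $k(y)^{alg}$ for the derivation $\partial/\partial y$ (which \emph{is} $d/dy$, since it kills $k$). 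The residual descent from $k(y)^{alg}$ to $\mathbb C(y)^{alg}$—needed in both approaches—then follows from Picard--Vessiot theory for the associated linear equation.
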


\begin{proof}
Suppose $\trdeg k\gen{y}/k=2$. 
\par Let $L=k(y)^{alg}$ and $\partial$ be the extension of the derivation of $k$ such that $\partial(y) = 0$. Since $y'$ is transcendental over $L$ we can work in the field $L\gen{\gen{1/y'}}$ of Puiseux series in $1/y'$ over $L$. We have a well defined derivation
\[(\sum a_iy'^{\lambda_i})'=\sum \partial(a_i) y'^{\lambda_i}+\sum \frac{\partial a_{i}}{\partial y }y'^{\lambda_i+1}+\sum \lambda_ia_iy'^{\lambda_i-1}y''\] where $\lambda_i$, $i\geq0$, are descending rational numbers with a common denominator, $a_i\in L$ and $a_0\neq0$. It follows that $K\gen{y}^{alg}$ is a differential subfield of $L\gen{\gen{1/y'}}$. Now let $u=\frac{y''}{y'^2}$ and by replacing in equation $(\star')$ we get
\[\frac{u'}{y'}+\frac{1}{2}u^2+R(y)=0.\]
Writing $u=\sum a_iy'^{\lambda_i}$ with $i\geq0$ and $a_0\neq0$ and differentiating
\[\frac{u'}{y'}=\sum \partial(a_i)y'^{\lambda_i-1}+\sum \frac{ \partial a_{i}}{\partial y}y'^{\lambda_i}+\sum \lambda_ia_iy'^{\lambda_i}\sum a_i y'^{\lambda_i} = -\frac{1}{2} \left(\sum a_iy'^{\lambda_i}\right)^2  -R(y).\]
Since $R(y)$ (a coefficient of $y'^0$) appear non-trivially in the above and since $R(y)\not\in k^{alg}$, it is easily seen that $\lambda_0=0$ and 
 \[\frac{\partial a_{0}}{\partial y}+\frac{1}{2}a_0^2+R(y) =0.\]
 But then as before $a_0/2$ can be seen as an algebraic solution of $(\star\star)$ in $\mathbb{C}(y)^{alg}$ contradicting Condition \ref{Ric}.\\
 \end{proof}
 
Let us say a few words about when Condition \ref{Ric} holds. First, it is well-known that the Riccati equation ($\star\star$) has an algebraic solution if and only if a certain associated order two linear equation has Liouvillian solutions (see \cite[1.3]{KOVACIC19863}). In \cite{KOVACIC19863}, Kovacic gives an algorithm to determine when an order two linear equation has Liouvillian solutions, and so, by the above equivalence, Kovacic's algorithm can be used to determine when Condition \ref{Ric} holds. 
Moreover,  when the rational function $R(y)$ is of ``triangular form''


\[R(y) = R_{\alpha,\beta,\gamma}(y)=\frac{1}{2}\left(\frac{1-{\beta}^{-2}}{y^2}+\frac{1-{\gamma}^{-2}}{(y-1)^2}+\frac{{\beta}^{-2}+{\gamma}^{-2}-{\alpha}^{-2}-1}{y(y-1)}\right)\] with $\alpha,\beta,\gamma\in \mathbb{C}\cup \{ \infty \}$, one can give precise conditions under which Condition \ref{Ric} holds. The particular rational function $R_{\alpha,\beta,\gamma}(y)$ given above is referred to as being in triangular form because a Schwarz triangle function with angles of the associated hyperbolic triangle given by $(\frac{\pi}{\alpha},\frac{ \pi }{\beta} , \frac{\pi }{\gamma}),$ for integers $\alpha, \beta, \gamma\in\m{N} \cup \{\infty \}$ satisfies $S_{\frac{d}{dt}}(y) +(y')^2\cdot R_{\alpha,\beta,\gamma}(y) =0$. This classification of algebraic solutions to \ref{Ric} applies more generally to the case where $\alpha, \beta , \gamma \in \m C \cup \{ \infty \}$ and comes ultimately from Theorem I of \cite{Kimura}, but is explained in detail in \cite[Proposition 4.4]{blazquez2020some}. 

\begin{fact} \label{kimurafact} Condition \ref{Ric} holds of $$\frac{du}{dy}+u^2+\frac{1}{2}R_{\alpha, \beta, \gamma }(y) =0$$ as long as none of the following conditions hold.
\begin{enumerate}
    \item The quantities $\alpha^{-1}$ or $-\alpha^{-1}$, $\beta^{-1}$ or $-\beta^{-1}$ and $\gamma^{-1}$ or $-\gamma^{-1}$ take, in an arbitrary order, values given in the following table:

\begin{center}
\renewcommand{\arraystretch}{1.2}
\begin{tabular}{|c|c|c|c|c|}
\hline 
  & $\pm \alpha^{-1}$ & $\pm\beta^{-1}$ & $\pm \gamma^{-1}$ & \\ 
\hline 
1  & $\frac{1}{2}+\ell$ & $\frac{1}{2}+m$ & arbitrary & \\ 
\hline 
2 & $\frac{1}{2} + \ell$ & $\frac{1}{2}+m$ & $\frac{1}{2}+ n$ & \\  
\hline
3 & $\frac{2}{3}+\ell$ & $\frac{1}{3}+m$ & $\frac{1}{4}+n$ & $\ell+m+n$ even \\ 
\hline
4 &  $\frac{1}{2}+\ell$ & $\frac{1}{3}+m$ & $\frac{1}{4}+n$ &  \\ 
\hline
5 &  $\frac{2}{3}+\ell$ & $\frac{1}{4}+m$ & $\frac{1}{4}+n$ & $\ell+m+n$ even  \\
\hline
6 &  $\frac{1}{2}+\ell$ & $\frac{1}{3}+m$ & $\frac{1}{5}+n$ &   \\ 
\hline
7 &  $\frac{2}{5}+\ell$ & $\frac{1}{3}+m$ & $\frac{1}{3}+n$ & $\ell+m+n$ even  \\ 
\hline
8 &  $\frac{2}{3}+\ell$ & $\frac{1}{5}+m$ & $\frac{1}{5}+n$ & $\ell+m+n$ even  \\ 
\hline
9 &  $\frac{1}{2}+\ell$ & $\frac{2}{5}+m$ & $\frac{1}{5}+n$ & $\ell+m+n$ even  \\ 
\hline
10 &  $\frac{3}{5}+\ell$ & $\frac{1}{3}+m$ & $\frac{1}{5}+n$ & $\ell+m+n$ even  \\ 
\hline
11 &  $\frac{2}{5}+\ell$ & $\frac{2}{5}+m$ & $\frac{2}{5}+n$ & $\ell+m+n$ even  \\ 
\hline
12 &  $\frac{2}{3}+\ell$ & $\frac{1}{3}+m$ & $\frac{1}{5}+n$ & $\ell+m+n$ even  \\ 
\hline
13 &  $\frac{4}{5}+\ell$ & $\frac{1}{5}+m$ & $\frac{1}{5}+n$ & $\ell+m+n$ even  \\ 
\hline
14 &  $\frac{1}{2}+\ell$ & $\frac{2}{5}+m$ & $\frac{1}{3}+n$ & $\ell+m+n$ even  \\ 
\hline
15 &  $\frac{3}{5}+\ell$ & $\frac{2}{5}+m$ & $\frac{1}{3}+n$ & $\ell+m+n$ even  \\ 
\hline
\end{tabular}
\end{center}
where $\ell$, $m$, $n$ stand for arbitrary integers.

\item At least one of the four complex numbers,
    $\alpha^{-1} + \beta^{-1} + \gamma^{-1}$, 
    $-\alpha^{-1} + \beta^{-1} + \gamma^{-1}$,
    $\alpha^{-1} - \beta^{-1} + \gamma^{-1}$,
    $\alpha^{-1} + \beta^{-1} - \gamma^{-1}$ is an odd integer.
\end{enumerate}

\end{fact}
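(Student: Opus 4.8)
The plan is to reduce Condition~\ref{Ric} to a statement about the differential Galois group of a hypergeometric equation, and then to invoke the classical Schwarz--Kimura classification. First, I would pass from the Riccati equation ($\star\star$) to its associated second order linear equation: substituting $u=w'/w$ turns ($\star\star$) into
\[
Lw := w''+\tfrac12 R_{\alpha,\beta,\gamma}(y)\,w=0,
\]
and conversely every nonzero solution $w$ of $Lw=0$ gives a solution $u=w'/w$ of ($\star\star$). Hence ($\star\star$) has a solution in $\mathbb{C}(y)^{alg}$ exactly when $L$ has a nonzero solution with algebraic logarithmic derivative over $\mathbb{C}(y)$. By the structure theory of order two linear equations underlying Kovacic's algorithm (cf.\ \cite[1.3]{KOVACIC19863}), this in turn is equivalent to $L$ having a nonzero Liouvillian solution, that is, to the differential Galois group $G\le\SL_2(\mathbb{C})$ of $L$ over $\mathbb{C}(y)$ being a \emph{proper} algebraic subgroup. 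So Condition~\ref{Ric} amounts precisely to $G=\SL_2(\mathbb{C})$, and it suffices to prove that $G\subsetneq\SL_2(\mathbb{C})$ forces one of the two conditions in the Fact.

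Next I would identify $L$. Computing the indicial equations shows that $L$ has regular singular points exactly at $y=0,1,\infty$, with exponent differences $\pm\beta^{-1},\pm\gamma^{-1},\pm\alpha^{-1}$ there, and conversely that clearing the first order term from the Gauss hypergeometric equation with these exponent differences, via a gauge transformation $w=y^{s}(1-y)^{t}v$, reproduces $L$ exactly. (This is the meaning of the text's remark that $R_{\alpha,\beta,\gamma}$ is the ``triangular form'' of the Schwarzian of a ratio of hypergeometric solutions.) Thus $G$ equals the differential Galois group of that hypergeometric equation, and I may work with the standard local data $(\alpha^{-1},\beta^{-1},\gamma^{-1})$ at the three singular points, which is well defined only up to permutation, sign change, and integer translation of each coordinate.

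Then I would invoke the classification of second order linear equations whose Galois group is a proper subgroup of $\SL_2(\mathbb{C})$ --- equivalently, the three cases of Kovacic's algorithm: $G$ is reducible, or $G$ is irreducible and imprimitive (conjugate into the normalizer of a maximal torus), or $G$ is finite, hence one of the binary tetrahedral, octahedral or icosahedral groups. For a Riemann equation each case translates into an arithmetic condition on $(\alpha^{-1},\beta^{-1},\gamma^{-1})$: reducibility is equivalent to one of $a,b,c-a,c-b$ being an integer, which on exponent differences says that $\pm\alpha^{-1}\pm\beta^{-1}\pm\gamma^{-1}$ is an odd integer for some choice of signs --- this is the second condition in the Fact; imprimitivity of infinite-dihedral type is equivalent to two of $\alpha^{-1},\beta^{-1},\gamma^{-1}$ lying in $\tfrac12+\mathbb{Z}$ --- this is row~1 of the table; and finiteness is equivalent to $(\alpha^{-1},\beta^{-1},\gamma^{-1})$ reducing, under the equivalence above, to an entry of Schwarz's list for the three finite primitive groups, which unwinds to rows~2--15 of the table, the parity conditions ``$\ell+m+n$ even'' being exactly the obstruction that survives the integer translations. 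This last step is Kimura's Theorem~I, made explicit in \cite[Proposition~4.4]{blazquez2020some}. Contrapositively, if neither condition of the Fact holds then $G=\SL_2(\mathbb{C})$, so $L$ has no Liouvillian solution and ($\star\star$) has no solution in $\mathbb{C}(y)^{alg}$; that is, Condition~\ref{Ric} holds.

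The hard part is the arithmetic bookkeeping in the finite case: one must keep honest track of the equivalence generated by permuting $(\alpha^{-1},\beta^{-1},\gamma^{-1})$, flipping signs, and shifting each coordinate by an integer, and verify that Schwarz's finite list produces precisely the fifteen rows with exactly the stated parity constraints; everything analytic or differential-algebraic is classical. A self-contained route that sidesteps the hypergeometric dictionary is to run Kovacic's algorithm directly on $L$: its three cases are reducible/imprimitive/finite, and the algorithm's necessary conditions on the local exponents $\pm\alpha^{-1},\pm\beta^{-1},\pm\gamma^{-1}$ at $0,1,\infty$ reproduce both the table and the odd-integer condition --- at the price of a longer but elementary case analysis.
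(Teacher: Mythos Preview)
The paper does not prove this statement: it is labeled a \emph{Fact} and simply attributed to Theorem~I of Kimura, with the translation to the present setting deferred to \cite[Proposition~4.4]{blazquez2020some}. Your outline is correct and follows exactly the classical route those references take---pass from the Riccati equation to the normalized second order linear equation $w''+\tfrac12 R_{\alpha,\beta,\gamma}w=0$, identify it as the reduced hypergeometric equation with exponent differences $(\alpha^{-1},\beta^{-1},\gamma^{-1})$, and then read off the reducible, imprimitive, and finite cases of the differential Galois group from the Schwarz--Kimura list---so there is nothing to compare against.
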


\begin{prop} \label{theorder2result}
The differential equations satisfied by Schwarz triangle functions have no order two subvarieties. 

\end{prop}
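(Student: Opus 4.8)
The plan is to read off Proposition~\ref{theorder2result} directly from Lemma~\ref{juststrmin} and Fact~\ref{kimurafact}. Recall from the introduction that the Schwarz triangle function attached to a hyperbolic triangle with angles $(\tfrac{\pi}{\alpha},\tfrac{\pi}{\beta},\tfrac{\pi}{\gamma})$ satisfies the Schwarzian equation $(\star')$ with $R=R_{\alpha,\beta,\gamma}$, where $\alpha,\beta,\gamma\in\{2,3,4,\dots\}\cup\{\infty\}$ and, the triangle being hyperbolic, $\alpha^{-1}+\beta^{-1}+\gamma^{-1}<1$. By Lemma~\ref{juststrmin} it therefore suffices to show that Condition~\ref{Ric} holds for $R_{\alpha,\beta,\gamma}$ for every such triple; and by Fact~\ref{kimurafact} this reduces to verifying that neither exceptional case (1) nor (2) of that Fact can occur under the constraints $\alpha,\beta,\gamma\in\{2,3,4,\dots\}\cup\{\infty\}$ and $\alpha^{-1}+\beta^{-1}+\gamma^{-1}<1$.

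Case (2) is immediate. Each of $\alpha^{-1},\beta^{-1},\gamma^{-1}$ lies in $[0,\tfrac12]$, so the four numbers $\alpha^{-1}+\beta^{-1}+\gamma^{-1}$, $-\alpha^{-1}+\beta^{-1}+\gamma^{-1}$, $\alpha^{-1}-\beta^{-1}+\gamma^{-1}$, $\alpha^{-1}+\beta^{-1}-\gamma^{-1}$ all lie in $[-\tfrac12,\tfrac32]$, whose only odd integer is $1$. If $\alpha^{-1}+\beta^{-1}+\gamma^{-1}=1$ this contradicts hyperbolicity outright. If instead one of the mixed sums, say $-\alpha^{-1}+\beta^{-1}+\gamma^{-1}$, equals $1$, then $\beta^{-1}+\gamma^{-1}=1+\alpha^{-1}\geq1$, which together with $\beta^{-1},\gamma^{-1}\leq\tfrac12$ forces $\beta^{-1}=\gamma^{-1}=\tfrac12$ and $\alpha^{-1}=0$, so $\alpha^{-1}+\beta^{-1}+\gamma^{-1}=1$ again, a contradiction. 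Hence (2) never holds.

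Case (1) is the finite, bookkeeping-heavy part: one inspects the fifteen rows of the table. The key observation is that $\pm\alpha^{-1}\in\{0\}\cup\{\pm\tfrac1n:n\geq2\}$, so a table entry of the form $\tfrac{a}{b}+(\text{integer})$ can equal some $\pm\alpha^{-1}$ only as follows: a $\tfrac12$-entry forces $\alpha=2$ (shift $0$ or $-1$); a $\tfrac13$-, $\tfrac14$- or $\tfrac15$-entry forces $\alpha=3,4$ or $5$ respectively, with shift $0$; a $\tfrac23$-entry forces $\alpha=3$ with shift $-1$; a $\tfrac45$-entry forces $\alpha=5$ with shift $-1$; and a $\tfrac25$- or $\tfrac35$-entry cannot be matched at all. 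Hence the six rows containing a $\tfrac25$ or $\tfrac35$ (rows 7, 9, 10, 11, 14, 15) are vacuous. Rows 1 and 2 force at least two of $\alpha,\beta,\gamma$ to equal $2$, so $\alpha^{-1}+\beta^{-1}+\gamma^{-1}\geq1$, contradicting hyperbolicity; similarly rows 4 and 6 force $\{\alpha,\beta,\gamma\}$ to be $\{2,3,4\}$ or $\{2,3,5\}$, with reciprocal sums $\tfrac{13}{12}$ and $\tfrac{31}{30}$, again $\geq1$. Each of the remaining rows 3, 5, 8, 12, 13 carries the requirement ``$\ell+m+n$ even'' and contains exactly one entry of the form $\tfrac23+(\text{int})$ or $\tfrac45+(\text{int})$ together with two of the form $\tfrac13+(\text{int})$, $\tfrac14+(\text{int})$ or $\tfrac15+(\text{int})$, so the matching above forces $\ell+m+n=-1$, which is odd: the parity requirement fails. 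Therefore case (1) never holds either, Condition~\ref{Ric} holds for every $R_{\alpha,\beta,\gamma}$ coming from a hyperbolic triangle, and Lemma~\ref{juststrmin} yields the Proposition.

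There is no conceptual obstacle here: the Puiseux-series computation of Lemma~\ref{juststrmin} has already converted ``no order-two subvariety'' into the single algebraicity statement Condition~\ref{Ric}, and Kimura's classification (Fact~\ref{kimurafact}, following \cite[Prop.~4.4]{blazquez2020some}) converts that into the elementary arithmetic of the triangle angles, after which hyperbolicity does everything. The only care required is a uniform treatment of the table's combinatorics---the per-entry sign choices, the permutation of the three columns, and the bookkeeping of which integer shifts each row allows. One could alternatively check Condition~\ref{Ric} through the monodromy of the associated hypergeometric equation, which for a hyperbolic triangle group is irreducible, infinite, and of none of the finite (dihedral, tetrahedral, octahedral, icosahedral) types, hence admits no Liouvillian solution; but the direct table check is more self-contained.
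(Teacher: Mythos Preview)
Your proof is correct and follows exactly the paper's route: reduce to Condition~\ref{Ric} via Lemma~\ref{juststrmin}, then use Fact~\ref{kimurafact} together with integrality of $\alpha,\beta,\gamma$ and the hyperbolicity bound $\alpha^{-1}+\beta^{-1}+\gamma^{-1}<1$ to exclude both exceptional cases. Your case analysis is in fact tighter than the paper's in two spots: in case~(2) you treat all four signed sums rather than only $\alpha^{-1}+\beta^{-1}+\gamma^{-1}$, and for rows 3, 5, 8, 12, 13 you correctly invoke the parity constraint $\ell+m+n$ even (the paper's blanket assertion that these rows ``can never occur for an integer $\alpha$'' overlooks that, e.g., $-\tfrac13=\tfrac23-1$ and $-\tfrac15=\tfrac45-1$, so integrality alone does not dispose of them).
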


\begin{proof}
Condition (2) of Fact \ref{kimurafact} can never hold for $\alpha, \beta, \gamma$ coming from a hyperbolic triangle with angles $(\frac{\pi}{\alpha},\frac{ \pi }{\beta} , \frac{\pi }{\gamma}),$ as hyperbolicity of the triangle implies that $\frac{1}{\alpha } + \frac{1}{\beta } + \frac{1}{\gamma} <1$. 

With $\alpha, \beta $ integers and $\frac{1}{\alpha } + \frac{1}{\beta } + \frac{1}{\gamma} <1$, we can never have rows 1,2 of Condition (1), since the conditions on $\alpha$ and $\beta $ would require both to be $2$, violating hyperbolicity. Rows 3, 5, 7, 8, 10, 11, 12, 13, and 15 can never occur for an integer $\alpha$. For integers $\alpha, \beta, \gamma$, row 4 would require $\alpha =2, \beta =3, \gamma = 4$, violating hyperbolicity. Similarly, we would have a violation of hyperbolicity if row 6 holds. The conditions on $\beta $ in rows 9 and 14 can not hold for an integer $\beta$. But we've now ruled out conditions (1) and (2) of Fact \ref{kimurafact}. 

Thus, Condition \ref{Ric} holds when $R (y) = R_{\alpha,\beta,\gamma}(y)$ with $\alpha, \beta, \gamma$ coming from a hyperbolic triangle with angles $(\frac{\pi}{\alpha},\frac{ \pi }{\beta} , \frac{\pi }{\gamma})$. Now, by Lemma \ref{juststrmin}, the equation $S_{\frac{d}{dt}}(y) +(y')^2\cdot R(y) =0$ satisfied by the Schwarz triangle equation has no order two subvarieties. 
\end{proof}

\section{Strong minimality in the style of Nishioka} \label{nishgen} 
In this section, we use the results of Section \ref{ord2} with the model theoretic notions of Section \ref{thenonminsection} to give a new proof of the strong minimality of the equations satisfied by Schwarz triangle functions (and so including the $j$-function). In particular these results strengthen the main theorems of \cite{nishiokaII, nishioka1990painleve}, generalize the main theorem of \cite{freitag2017strong}, and give new proofs of results of \cite{blazquez2020some, casale2020ax} in the case of Schwarz triangle functions.

\begin{prop}
The differential equations satisfied by Schwarz triangle functions are strongly minimal.
\end{prop}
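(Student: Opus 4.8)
The plan is to assemble the ingredients already developed in the paper into a short chain of implications. Let $a$ be a nonalgebraic solution of the Schwarzian equation $(\star')$ associated to a hyperbolic triangle group, working over the field $\mathbb{C}$, and let $p=\tp(a/\mathbb{C})$. First I would invoke Nishioka's result (the elementary argument from \cite{Nishioka}) that the equation has no proper infinite differential subvarieties over $\mathbb{C}$, so that $a$ is generic, i.e. $p$ is the generic type of the order $3$ differential variety $X$ defined by $(\star')$, and $\RU(p)=3$ would be reduced to showing $p$ has no nonalgebraic forking extension. Since this is exactly the condition of strong minimality once genericity is known, the whole problem reduces to bounding $\nmdeg(p)$.

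Next I would apply Proposition \ref{theorder2result}: the Schwarz triangle equations have no order two subvarieties, hence in particular no co-order one differential subvarieties (a co-order one subvariety of an order $3$ variety has order $2$). By Proposition \ref{nmbound1} this gives $\nmdeg(p)\leq 1$. Now by definition of the degree of nonminimality, $\nmdeg(p)\leq 1$ means: any nonalgebraic forking extension of $p$ is already witnessed over a single realization $a_1$ of $p$. Combined with genericity (no proper infinite subvarieties over $\mathbb{C}$, hence over any $\mathbb{C}\langle a_1\rangle$ as well — here one uses that adding a generic solution does not create new subvarieties, which follows from Nishioka's argument applied over the larger field, or from the transitivity/symmetry of forking), this forces $p$ to have no nonalgebraic forking extension at all, which is precisely strong minimality of $X$.

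To make the last step clean I would instead route through the no-proper-fibrations machinery of Section \ref{nofibs}: by Lemma \ref{nofib1}, since Schwarz triangle functions are not $2$-reducible over $\mathbb{C}$ (Nishioka, Theorem \ref{nishi2}), the generic type $p$ has no proper fibrations. By Proposition \ref{nofib}, $p$ is then either almost internal to a non locally modular minimal type, or $a$ is interalgebraic over $\mathbb{C}$ with independent realizations of a locally modular minimal type. The second case is incompatible with $\RU(p)=3$ unless $p$ itself were minimal — and if $p$ were minimal we would be done anyway — so in the non-minimal case we are in the first alternative only if $\nmdeg(p)$ could be large; but Proposition \ref{nmbound1} (via no order two, hence no co-order one, subvarieties) rules out almost internality to the constants, i.e. rules out the first alternative. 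Hence $p$ must be minimal, and genericity plus minimality is strong minimality.

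The main obstacle is bookkeeping the interface between the three reduction principles: one must make sure that "no order two subvarieties" genuinely yields "no co-order one subvarieties" in the sense required by \cite{freitag2017finiteness}/Proposition \ref{nmbound1} (this is where the rank count $3 = 2+1$ is used), and that the degree-of-nonminimality bound together with Nishioka's genericity statement really does collapse to strong minimality rather than merely to minimality of the generic type over $\mathbb{C}$. Once those two fits are verified, the proof is a three-line citation chain: Proposition \ref{theorder2result} $\Rightarrow$ $\nmdeg(p)\le 1$ via Proposition \ref{nmbound1}; Theorem \ref{nishi2} $\Rightarrow$ no proper fibrations via Lemma \ref{nofib1}; Proposition \ref{nofib} then forces $p$ minimal; and genericity over $\mathbb{C}$ (Nishioka) upgrades this to strong minimality.
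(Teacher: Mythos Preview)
Your overall architecture matches the paper's: reduce to minimality of the generic type via Nishioka's ``no proper infinite subvarieties over $\mathbb{C}$'' argument, use Theorem \ref{nishi2} with Lemma \ref{nofib1} to get no proper fibrations, then invoke the dichotomy of Proposition \ref{nofib} and kill both alternatives using Proposition \ref{theorder2result}. That is exactly the paper's route, and in particular the detour through $\nmdeg(p)\le 1$ is not needed --- the paper never actually uses the nonminimality-degree bound in the final argument, only the fibration dichotomy.

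There is, however, a genuine gap in your treatment of alternative (2) of Proposition \ref{nofib}. You write that ``the second case is incompatible with $\RU(p)=3$ unless $p$ itself were minimal.'' This is false. If $a$ is interalgebraic over $\mathbb{C}$ with an independent tuple $(b_1,b_2,b_3)$ of realizations of an order-one locally modular minimal type $q$, then the transcendence degree is $3$ and there is no contradiction with the rank at all; this case must be eliminated by a separate argument. The paper does this explicitly: from the interalgebraic correspondence $p\leftrightarrow q\otimes q\otimes q$ and the coordinate projection $q\otimes q\otimes q\to q$, the preimage of a generic realization of $q$ is a type of order $2$ inside $X$ over $\mathbb{C}\langle b_1\rangle$, contradicting Proposition \ref{theorder2result} (which rules out order-two subvarieties over \emph{any} base, not just $\mathbb{C}$). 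Only after this case is excluded does one conclude that the product must be a single factor, i.e.\ $p$ is interalgebraic with $q$ itself and hence minimal.

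A smaller point: your elimination of alternative (1) cites Proposition \ref{nmbound1}, but that proposition only yields $\nmdeg(p)\le 1$, not ``not almost internal to the constants.'' What you actually want is the content of the paragraph preceding Proposition \ref{nmbound1}: almost internality to the constants implies nonorthogonality to $\mathbb{C}$, hence a nonconstant definable map $X\to\mathbb{C}$, whose generic fibre is an order-two subvariety of $X$ --- again contradicting Proposition \ref{theorder2result}. Once you phrase both eliminations this way, the $\nmdeg$ machinery drops out of the proof entirely.
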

\begin{proof}
Denote by $X$ the solution set of such an equation. By the remarks concluding Section \ref{thenonminsection}, we need only establish the minimality of the generic type of $X$. 
By Theorem \ref{nishi2} and Lemma \ref{nofib1}, the generic type $p = \tp (a /\m C)$ of such an equation has no fibrations. So, it follows by Proposition \ref{nofib} that $p$ is either almost internal to $\m C$ or is interalgebraic over $\m C$ with the product of a locally modular type with itself. 

If $p$ is almost internal to the constants, then it follows that $p$ is nonorthogonal to $\m C$ and so there is a nonconstant definable map from $X$ to $\m C$. A generic fiber of this map gives an order $2$ subvariety of $X$, which would contradict Proposition \ref{theorder2result}. 

If $p $ is interalgebraic with $q \otimes q \ldots \otimes q$, where $q$ is locally modular, then either $q$ is an order one type and the product is $q \otimes q \otimes q$ or $q$ is order three and $p$ is minimal. In the first case, we have an algebraic correspondence: $\phi: p \rightarrow q \otimes q \otimes q $ and a natural map $\pi_1: q \otimes q \otimes q \rightarrow q$. Let $a$ be a $\m C$-generic realization of $q$. Then $\phi^{-1} (\pi_1^{-1} ( a))$ is a type of order 2 over $\m C \langle a \rangle $ and is contained in $X$. This is impossible by Proposition \ref{theorder2result}. So, it must be that $p$ is minimal. 
\end{proof}
Since $X$ is defined over $\m C$, it follows from Proposition 5.8 of \cite{casale2020ax} that $X$ is geometrically trivial. Generalizing Nishioka's result, Theorem \ref{nishi2}, to larger classes of automorphic functions, would similarly yield a proof the strong minimality of the associated equations - can one give a proof in the style of \cite{nishiokaII}? Doing so for Fuchsian functions of the first kind would allow one to recover the main strong minimality results of \cite{casale2020ax}.

\bibliography{research}{}
\bibliographystyle{plain}
\end{document}